\newtheorem{theorem}{Theorem}[section]
\newtheorem{lemma}[theorem]{Lemma}
\theoremstyle{definition}
\theoremstyle{remark}
\numberwithin{equation}{section}
\newcommand{\R}{{\mathbb R}}
\newcommand{\overbar}[1]{\mkern 1.5mu\overline{\mkern-1.5mu#1\mkern-1.5mu}\mkern 1.5mu}
\author{Mark Allen}
\address{Department of Mathematics, Brigham Young University, Provo,
  UT 84602}
\email{allen@mathematics.byu.edu}
\subjclass[2010]{35K90,35R09,35R11,45N05,45K05}
\title[Uniqueness for weak solutions]{Uniqueness for weak solutions of parabolic equations with a fractional time derivative}
\begin{document}

\maketitle
\makeatletter
\vspace{-2em}
{\centering\enddoc@text}
\let\enddoc@text\empty % to remove the contact info from the end of the document
\makeatother

\begin{abstract}
 We prove uniqueness for weak solutions to abstract parabolic equations with the fractional Marchaud or Caputo time derivative. We consider weak solutions
 in time for divergence form equations when the fractional derivative is transferred to the test function. 
\end{abstract}

\section{Introduction}
 We prove a uniqueness result for weak-in-time solutions to abstract evolutionary equations. In order to state
 the equation, we recall notation
  from \cite{z09}. Let $\mathscr{V}$ and $\mathscr{H}$ be real separable Hilbert spaces such that $\mathscr{V}$ is densely
 and continuously embedded into $\mathscr{H}$. If $(\cdot, \cdot)$ is the pairing for $\mathscr{H}$ and $\langle \cdot, \cdot \rangle$ 
 is the pairing for $\mathscr{V}' \times \mathscr{V}$ with $\mathscr{V}'$ the dual of $\mathscr{V}$, then we consider the equation 
  \begin{equation}   \label{e:main}
   \begin{aligned}
   &( \partial_t^{\alpha} u(t), v(t)) + a(t,u(t),v(t)) = \langle f(t) , v(t) \rangle,  \\
   &\quad \text{ if } v \in \mathscr{V} \text{  for a.e. } t \in (0,T),
   \end{aligned}
  \end{equation}
   %with initial condition $u(0)=x \in \mathscr{H}$. 
   where $\partial_t^{\alpha}$ is the Caputo or Marchaud derivative, 
   and $a:(0,T) \times \mathscr{V} \times \mathscr{V} \to \mathbb{R}$ is a coercive bilinear form.    
    Zacher \cite{z09} proved existence and uniqueness for an equation similar to  \eqref{e:main} but with a different weak formulation than what we will consider and 
    with an even more general fractional derivative.  We will consider a different notion of weak solution made precise in \eqref{e:weak} that naturally accounts for initial data 
    defined on $(-\infty,0]$ as $v(t)$ rather than only defining initial data at the initial time $0$ as $u(0)=v$. 
        
   The evolutionary equation \eqref{e:main} with the fractional time derivative is analogous to abstract evolutionary equations with the usual
   time derivative. One of the most notable examples being the equation 
    \begin{equation}  \label{e:local}
     u_t - (a^{ij}(x,t)u_i)_j = f. 
    \end{equation}
     The function $u$ is a weak solution to the above equation if for every $t \in (0,T)$ and $v \in W_0^{1,2}(\Omega)$, one has
     \begin{equation}  \label{e:strongt}
      \int_{\Omega} u_t v  + a^{ij}(x,t)u_i v_j = \int_{\Omega} fv. 
     \end{equation}
     Solutions to \eqref{e:strongt} are considered weak solutions that are strong in time.   If $u$ is a
     solution to \eqref{e:strongt}, then by integrating in time and transferring the derivative in time to the test function, $u$ is also a solution to 
     \begin{equation}   \label{e:weakt}
      \int_{\Omega} u(x,0)v(x,0)- \int_0^T \int_{\Omega} u v_t + a^{ij}(x,t)u_i v_j = \int_0^T \int_{\Omega} f(x,t)v(x,t)
     \end{equation}
    for all $v \in L^2(0,T;H_0^1(\Omega))$ with $v'  \in L^2(0,T;L^2(\Omega))$ and $v(x,T)\equiv 0$. 
    Proving uniqueness for solutions
    to \eqref{e:strongt} is straightforward since $u$ is a valid test function, so one simply multiplies by $u$ and integrates in time. Proving
    uniqueness for solutions to \eqref{e:weakt} is not straightforward, however, since a priori it is not known if $u$ is a valid test function. In this paper we say ``weak-in-time'' to emphasize that via an integration by parts formula, the fractional time derivative is transferred to the 
    test function as in \eqref{e:weakt}. 
    %``Weak-in-time'' is not meant to say that the spatial operator does not have a weak formulation as one may notice 
    %that the elliptic operator in \eqref{e:weakt} is a weak formulation. 
    
    It is advantageous to consider solutions to \eqref{e:weakt}
    which are weak solutions in time since it is often convenient \cite{l67} to work in the larger class of solutions  
    to prove existence as well as regularity estimates such as H\"older continuity. This is also true for parabolic equations involving the Caputo derivative. 
    Zacher \cite{z13} utilized a larger class of solutions to prove H\"older continuity of solutions to parabolic equations involving the Caputo derivative
     which are local in the spatial variables. Recently in \cite{acv16} the author with Caffarelli and Vasseur proved H\"older continuity for solutions to a  nonlocal 
     parabolic equation of divergence form which is a nonlocal analogue to \eqref{e:local} and is given by
      \begin{equation}  \label{e:nonlocal}
       \partial_t^{\alpha} u - \int [w(y,t)-w(x,t)]K(x,y,t)= f(x,t). 
      \end{equation}
     The authors in \cite{acv16} utilized solutions that are weak-in-time to prove H\"older continuity. The same authors also adapted the methods 
     in \cite{acv17} to prove H\"older continuity for a nonlocal porous medium equation. We also remark that H\"older continuity for nondivergence 
     form parabolic equations involving fractional derivatives was shown in \cite{a16} and \cite{a17}. 
          
     Considering weak-in-time solutions is advantageous for existence and regularity results; however,  
     proving uniqueness becomes more difficult. Uniqueness for weak-in-time solutions to \eqref{e:nonlocal} was only shown in 
     \cite{acv16} if $K(x,y,t)$ is independent of $t$. In this paper we show uniqueness for $K(x,y,t)$ with no regularity assumptions on the variable $t$. 
     As the methods apply to the more general parabolic equation \eqref{e:main}, we show uniqueness for weak-in-time solutions to \eqref{e:main}. 
          
    %Proving uniqueness for strong-in-time solutions is quite easy. If $u_1,u_2$ are solutions with $u_1 = u_2$ on the parabolic boundary
    %$(\Omega \times \{0\}) \cup \partial \Omega \times [0,T)$, then $u=u_1 - u_2$ is a solution and let $v=u$ in \eqref{e:strongt}
    %and integrate in time. If $u$ is a solution to \eqref{e:weakt}, then $u$ itself is no longer a valid test function. If the coefficients $a^{ij}(x,t)$
    %are regular enough in $t$, then one may utilize that the antiderivative of $u$ will be a solution to an equation. However, without regularity 
    %assumptions in time on the coefficients $a^{ij}(x,t)$, this method no longer works. Instead one must consider Steklov averages. 
    
    In order to formulate weak-in-time solutions, we recall  the definition of the Caputo derivative and an integration by parts formula.

 \subsection{The Marchaud and Caputo Derivatives} \label{s:marchaud}
   The Caputo fractional time derivative for $0<\alpha<1$ is defined as 
    \begin{equation}   \label{e:caputo}
      _a\partial_t^{\alpha} u(t) := \frac{1}{\Gamma(1-\alpha)} \int_a^t \frac{u'(s)}{(t-s)^{\alpha}} \ ds,  
    \end{equation}
   and a reference on the Caputo derivative is \cite{d04}. An equivalent formulation of the Caputo derivative is
   \begin{equation}  \label{e:riemann}
    _a\partial_t^{\alpha} u(t) = \frac{d}{dt} \frac{1}{\Gamma(1-\alpha)} \int_a^t \frac{u(s)-u(a)}{(t-s)^{\alpha}} \ ds. 
   \end{equation}
  This is the formulation of the Caputo derivative utilized in \cite{z09} and \cite{z13}.    
  If $u \in C^1$, then using integration by parts on \eqref{e:caputo} one also has another equivalent formulation  
  \begin{equation}  \label{e:shortM}
    _a\partial_t^{\alpha} u(t) = \frac{1}{\Gamma(1-\alpha)} \frac{u(t)-u(a)}{(t-a)^{\alpha}} 
    + \frac{\alpha}{\Gamma(1-\alpha)} \int_a^t \frac{u(t)-u(s)}{(t-s)^{1+\alpha}} \ ds. 
   \end{equation}
   For simplicity in this paper we multiply $_a\partial_t^{\alpha}$ by the constant $\Gamma(1-\alpha)$. In \eqref{e:main} this 
   constant will be absorbed into $a(t,u(t),v(t))$ and the right hand side $f(t)$. If we define $u(t)=u(a)$ for $t<a$, then up to constant $_a\partial_t^{\alpha}$
    is also equivalent to 
    \begin{equation}  \label{e:marchaud}
    \partial_t^{\alpha} u(t) =  \alpha \int_{-\infty}^t \frac{u(t)-u(s)}{(t-s)^{1+\alpha}} \ ds. 
   \end{equation}
   The formulation in \eqref{e:marchaud} is also known as the Marchaud derivative \cite{skm93} and was recently studied in \cite{bmst16}. We will use the
   Marchaud derivative in \eqref{e:marchaud} in this paper. There are three main advantages to considering the Marchaud derivative. First, it is no longer 
   necessary to consider the initial point $a$ of integration. For this reason we have dropped the subscript $a$ in front of $\partial_t^{\alpha}$. Second, one may 
   consider more general initial data for a parabolic problem. Rather than $u(0)=u_0 \in \mathscr{H}$, one may consider for $t\leq 0$ the 
   initial data $u(t) = v(t)$ where $v(t)(1-t)^{(-1-\alpha)/2} \in L^2(-\infty,0; \mathscr{H})$. Third, the formulation in \ref{e:marchaud} allows one to more easily utilize the nonlocal nature of 
   the fractional time derivative to prove regularity results. This was
   accomplished for divergence form problems in \cite{acv16} and \cite{acv17} and nondivergence problems in \cite{a16} and \cite{a17}. 
   %For a nonlocal linear divergence 
   %problem this was done in cite (??). For a nonlocal porous medium problem this was done in (??). For nondivergence parabolic problems this was done in (??)
   %and (??). 
   
   For divergence form problems,  the nonlocal nature is utilized by considering an integration by parts formula which also gives a weak-in-time
   formulation of the problem. If $u, \phi \in C^{1}((-\infty,T])$ and $\phi(t)=0$ for $t\leq -M$ for some $M>0$, then 
    \begin{equation}   \label{e:byparts}
     \begin{aligned}
     &\int_{-\infty}^T \phi \partial_t^{\alpha} u \ dt + \int_{-\infty}^T u \partial_t^{\alpha} \phi \ dt \\
      &\quad=  \int_{-\infty}^T \partial_t^{\alpha} ( u \phi ) \ dt 
      %&\quad=  \int_{-\infty}^T \frac{u(t)\phi(t)}{(T-t)^{\alpha}} \ dt 
       + \alpha \int_{-\infty}^T \int_{-\infty}^t \frac{[u(t)-u(s)][\phi(t)-\phi(s)]}{(t-s)^{1+\alpha}} \ ds \ dt.
       \end{aligned}
    \end{equation}
   The Marchaud derivative \eqref{e:marchaud} looks similar to the one-dimensional fractional Laplacian except the integration occurs from only one side. Because of this
   the Marchaud derivative retains some features of the directional derivative. However, the Marchaud derivative also behaves similarly to the fractional Laplacian. This is 
   illustrated by the fact that \eqref{e:byparts} seems to be a combination of 
    \[
       \int \phi \partial_t u  + \int u\partial_t\phi = \int \partial_t(u \phi )
    \]
   and 
   \[
    \int \phi (-\Delta)^{\sigma} u = c_{\sigma} \int \int \frac{[u(x)-u(y)][\phi(x)-\phi(y)]}{|x-y|^{1+2\sigma}} \ dx \ dy. 
   \]
   %Remember to explain dividing out by \Gamma(1-\alpha) (??). 
   The first term in the right hand side of \eqref{e:byparts} can be rewritten as shown in \cite{acv16} to accommodate less regular functions, so that 
   the integration by parts formula becomes 
    \begin{equation}   \label{e:byparts3}
     \begin{aligned}
     &\int_{-\infty}^T \phi \partial_t^{\alpha} u \ dt + \int_{-\infty}^T u \partial_t^{\alpha} \phi \ dt \\
      %\quad=  \int_{-\infty}^T \partial_t^{\alpha} ( u \phi ) \ dt 
      &\quad=  \int_{-\infty}^T \frac{u(t)\phi(t)}{(T-t)^{\alpha}} \ dt 
       + \alpha \int_{-\infty}^T \int_{-\infty}^t \frac{[u(t)-u(s)][\phi(t)-\phi(s)]}{(t-s)^{1+\alpha}} \ ds \ dt.
       \end{aligned}
    \end{equation}
   If we only integrate from $0$ to $T$, then the integration by parts formula takes the form
      \begin{equation}   \label{e:byparts2}
     \begin{aligned}
     \int_{0}^T \phi \partial_t^{\alpha} u
      &=  \int_{0}^T \frac{u(t)\phi(t)}{(T-t)^{\alpha}} \ dt 
       + \alpha \int_{0}^T \int_{-\infty}^t \frac{[u(t)-u(s)][\phi(t)-\phi(s)]}{(t-s)^{1+\alpha}} \ ds \ dt \\
       &\quad - \int_{0}^T u \partial_t^{\alpha} \phi \ dt 
       + \int_{-\infty}^0 u(t)\phi(t)\left[\frac{1}{(T-t)^{\alpha}}- \frac{1}{(0-t)^{\alpha}} \right] \ dt
       \end{aligned}
    \end{equation}
    
 \subsection{Formulation of the weak solution and Main Result}
   With an integration by parts formula in hand we may give a definition of a weak solution. 
   We first recall the definition of the fractional Sobolev space
    \begin{equation}  \label{e:fracsob}
     H^{\alpha/2}(0,T;\mathscr{H}) := \left\{u \in L^2(0,T;\mathscr{H}): \int_0^T \int_0^T \frac{\|u(t)-u(s)\|_{\mathscr{H}}^2}{|t-s|^{1+\alpha}} < \infty \right\}. 
    \end{equation}
    See \cite{dpv12} for a reference of the fractional Sobolev spaces and their properties. 
    The proper initial condition for $u$ will be that $u(t)=v(t)$ for almost all $t \in (-\infty, 0]$ with $v(t)(1-t)^{(-1-\alpha)/2} \in L^2(-\infty,0;\mathscr{H})$. Notice that this initial condition
   includes the possibility that $v(t)$ is time independent so that $v(t)=v \in \mathscr{H}$ which corresponds to the usual initial condition for the Caputo derivative. 
   We then say that $u$ is a weak solution with initial condition $v$, if $u(t)=v(t)$ for almost every $t \leq 0$ and  
   \begin{equation}   \label{e:weak}
    \begin{aligned}
     &\int_{0}^T \frac{(u(t),\phi(t))_{\mathscr{H}}}{(T-t)^{1+\alpha}} \ dt 
      + \alpha \int_{0}^T \int_{-\infty}^t \frac{(u(t)-u(s),\phi(t)-\phi(s))_{\mathscr{H}}}{(t-s)^{1+\alpha}} \ ds \ dt \\
      &\quad - \int_{0}^T (u(t), \partial_t^{\alpha} \phi(t))_{\mathscr{H}} \ dt 
        + \int_0^T a(t,u(t),\phi(t)) \ dt \\
        &\quad + \int_{-\infty}^0 (v(t),\phi(t))_{\mathscr{H}}\left[\frac{1}{(T-t)^{\alpha}}- \frac{1}{(0-t)^{\alpha}} \right] \ dt \\
      &=  \int_0^T \langle f,\phi \rangle_{\mathscr{V}' \times \mathscr{V}} \ dt 
    \end{aligned}
   \end{equation}
   for all 
   \begin{gather*}
    \phi \in L^2(0,T;\mathscr{V}) \\
   \partial_t^{\alpha} \phi \in L^2(0,T; \mathscr{H}) \\
   \phi(t)(1-t)^{-(1+\alpha)/2} \in L^2(-\infty,0;\mathscr{H}) \\
   \phi(t) \equiv 0 \text{ for } t \leq -M \text{ for some } M >0. 
   \end{gather*}
  Since for any test function $\phi$, we assume 
  %From \cite{bmst16} since 
  $\phi(t) \equiv 0$ for $t \leq -M$ for some $M>0$, and since $\phi \in L^2(-M,T;\mathscr{H})$, from \cite{bmst16}
  the Marchaud derivative of $\phi$ can 
  be defined in the distributional sense, and this is how to understand the second requirement above. For the test function we
  will use later, $\partial_t^{\alpha} \phi$  will be defined classically everywhere and will be continuous.

   Our main theorem is the following uniqueness result. 
    
    \begin{theorem}   \label{t:main}
     Solutions to \eqref{e:weak} are unique; i.e. if $u_1,u_2$ are both solutions to \eqref{e:weak} with prescribed initial data $v(t)$
     and right hand side $f(t)$, then $u_1 - u_2 \equiv 0$. 
    \end{theorem}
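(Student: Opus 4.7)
The proof follows the standard subtract-and-test strategy, whose key obstacle --- highlighted in the introduction --- is that the natural test function is not a priori admissible. By linearity, $w := u_1 - u_2$ vanishes for a.e.\ $t \le 0$ and satisfies the homogeneous form of \eqref{e:weak}: for every admissible $\phi$,
\begin{align*}
&\int_{0}^T \frac{(w,\phi)_{\mathscr{H}}}{(T-t)^{1+\alpha}} \, dt + \alpha \int_{0}^T \int_{-\infty}^t \frac{(w(t)-w(s),\phi(t)-\phi(s))_{\mathscr{H}}}{(t-s)^{1+\alpha}} \, ds \, dt \\
&\qquad - \int_{0}^T (w, \partial_t^{\alpha} \phi)_{\mathscr{H}} \, dt + \int_0^T a(t,w,\phi)\, dt = 0.
\end{align*}
The plan is to choose $\phi$ so that each term produces a signed quadratic expression in $w$, after which coercivity of $a$ will force $w \equiv 0$.

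If $\phi = w$ were admissible, then applying \eqref{e:byparts2} with $u = \phi = w$ (the correction integral over $(-\infty,0]$ vanishing because $w \equiv 0$ there) would collapse the sum of the first three terms into a manifestly nonnegative quadratic involving the singular weighted $\mathscr{H}$-norm of $w$ together with the Gagliardo-type seminorm already appearing in the identity, and coercivity of $a(t,\cdot,\cdot)$ on the last term would complete the argument. The obstruction is precisely the definition of admissible test function, which demands $\partial_t^\alpha \phi \in L^2(0,T;\mathscr{H})$; the weak formulation provides no such information about $w$ itself.

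My remedy is to test against a sequence $\phi_\epsilon \to w$ of admissible approximants. A natural candidate is a backward time-Steklov average $\phi_\epsilon(t) := \chi_\epsilon(t)\,\tfrac{1}{\epsilon}\int_{t-\epsilon}^{t} w(s)\, ds$, smoothly cut off so that $\phi_\epsilon$ vanishes for $t \le -M$; one can then read off $\partial_t^\alpha \phi_\epsilon \in L^2(0,T;\mathscr{H})$ uniformly in $\epsilon$ directly from the convolution representation \eqref{e:marchaud}. A variant that more closely mimics what one would do for the local equation \eqref{e:weakt} is the right-sided Riemann--Liouville antiderivative $\phi_\tau(t) := \tfrac{1}{\Gamma(\alpha)}\int_t^\tau (s-t)^{\alpha-1} w(s)\, ds$ on a truncated interval $[0,\tau]$, whose classical fractional calculus arranges $\partial_t^\alpha \phi_\tau = w$ on $(0,\tau)$, so that the term $-(w,\partial_t^\alpha \phi_\tau)_{\mathscr{H}}$ immediately contributes $-\|w\|_{L^2(0,\tau;\mathscr{H})}^2$.

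The main technical hurdle will be the passage to the limit: one must verify that the $a$-term converges using only the $L^2(0,T;\mathscr{V})$ regularity of $w$, that $(w,\partial_t^\alpha \phi_\epsilon)_{\mathscr{H}}$ converges to the value formally obtained at $\phi = w$, and that the nonlocal double integral --- which straddles the line $t = 0$ where $w$ may be discontinuous and where the kernel $(t-s)^{-1-\alpha}$ is singular --- is handled with the correct boundary accounting. Once these commutator and convergence estimates are in place, the resulting limit identity controls the energy norm of $w$ from above by zero, and coercivity of $a(t,\cdot,\cdot)$ forces $w \equiv 0$, concluding the proof.
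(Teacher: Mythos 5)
Your overall strategy --- regularize $w$ in time by Steklov averaging so that it becomes an admissible test function, then pass to the limit and invoke coercivity --- is the same skeleton as the paper's proof, but the proposal stops exactly where the real work begins, and two of your specific claims are wrong. First, the backward Steklov average of an $L^2$ function does \emph{not} have $\partial_t^\alpha\phi_\epsilon$ bounded in $L^2(0,T;\mathscr{H})$ uniformly in $\epsilon$; the bound degenerates like a negative power of $\epsilon$. Fortunately uniformity is not needed: the paper only requires admissibility for each fixed averaging parameter $h$, because after testing with $\eta_{\overbar{h}}$, $\eta=(\psi u)_h$, the first two terms become nonnegative quadratics that are simply discarded before letting $h\to0$ (see \eqref{e:sol2}), and the remaining terms converge by Lemmas \ref{l:conve1} and \ref{l:conve2}. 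Second, your Riemann--Liouville alternative fails for a structural reason: $\partial_t^\alpha$ in \eqref{e:weak} is the \emph{left-sided} Marchaud derivative \eqref{e:marchaud}, while $\phi_\tau=\frac{1}{\Gamma(\alpha)}\int_t^\tau(s-t)^{\alpha-1}w(s)\,ds$ is a \emph{right-sided} fractional integral; the identity $\partial_t^\alpha\phi_\tau=w$ holds only for matching-sided operators, so the term $-(w,\partial_t^\alpha\phi_\tau)$ does not produce $-\|w\|_{L^2}^2$, and the Gagliardo double integral and the $(T-t)^{-\alpha}$-weighted term are not signed for this choice.

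The decisive missing idea is the temporal cutoff near $t=T$ and the commutator it generates. To use the Steklov transfer identity \eqref{e:transfer2} one must work with functions vanishing near $T$, so the paper first replaces $u$ by $\psi u$ (Lemma \ref{l:cut}); this produces the extra term $G(t)=\alpha\int_{-\infty}^t u(s)[\psi(t)-\psi(s)](t-s)^{-1-\alpha}\,ds$ on the right-hand side of \eqref{e:transfer}. After the limit $h\to0$ one is left with
\[
\int_0^T a(t,\psi u,\psi u)\,dt\ \le\ \alpha\int_{-\infty}^T\int_{-\infty}^t\frac{(\psi u(t),u(s)[\psi(t)-\psi(s)])}{(t-s)^{1+\alpha}}\,ds\,dt,
\]
and the entire difficulty is to show that the right side tends to $0$. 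This is Lemma \ref{l:psidelta}: one takes the piecewise-linear cutoff $\psi_\delta$ of \eqref{e:psidelta}, chooses $\epsilon$ so that $T-\epsilon$ is a Lebesgue point of $\|u\|$ and of the fractional integral $F(\cdot,0)$, and uses the continuity of $t_0\mapsto F(T-\epsilon,t_0)$ at $t_0=T-\epsilon$ to kill the commutator as $\delta\to0$; this is also why the theorem is first obtained on $[0,T-\epsilon]$ for a.e.\ $\epsilon$ and then extended by letting $\epsilon\to0$. Your proposal acknowledges that ``commutator and convergence estimates'' are needed but gives no argument for them, and in particular never identifies this cutoff commutator: your cutoff $\chi_\epsilon$ is placed at $t\le-M$, where it is unnecessary since $w\equiv0$ for $t\le0$, rather than near $t=T$, where it is essential. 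Without Lemma \ref{l:psidelta} or a substitute for it, the argument does not close.
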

   
   %By applying the existence result from \cite{z09} we obtain the following Corollary that states that solutions to \eqref{e:weak} with constant (in time) 
   %initial data are actually strong solutions in time. 
  % 
  % \begin{corollary}  \label{c:main}
  %   Let $u$ be a solution to \eqref{e:weak} such that $u(t)=x \in \mathscr{V}$ for all $t\leq 0$. Then $\partial_t^{\alpha} u$ can be computed classically almost 
 %    everywhere and $\partial_t^{\alpha} u \in L^2(0,T;\mathscr{H})$ so that $u$ is a solution to \eqref{e:main}. 
 %  \end{corollary}
   
   A future question of interest would be to determine necessary and sufficient conditions on the initial data $v(t)$, the bilinear form $a$, and right 
   hand side $f$, so that for a solution $u$ of \eqref{e:weak}, 
   the fractional derivative $\partial_t^{\alpha} u \in L^2(0,T;\mathscr{H}) $ and consequently $u$ is a strong solution to \eqref{e:weak}. Another question of interest 
   is proving uniqueness for weak-in-time solutions for fractional derivatives defined by a kernel $K(t,s)\approx (t-s)^{-1-\alpha}$ and satisfying 
   $K(t,t-s)=K(t+s,t)$. Such kernels were considered in \cite{acv17} and the H\"older continuity results in \cite{acv16} will also apply to fractional derivatives defined
   by such a kernel. 
   
   As this paper is concerned with uniqueness, we do not prove existence of solutions to \eqref{e:weak}. However, we do 
   mention that for a specific $\mathscr{V}$ and $\mathscr{H}$ and a restricted class of right-hand side functions, existence 
   was proven in \cite{acv16}.

 \subsection{Outline and Notation}
  To prove uniqueness we use Steklov averages. In Section \ref{s:steklov} we present preliminary results on how the Steklov averages behave with the fractional 
  Marchaud derivative. In Section \ref{s:unique} we prove our main result. Rather than show that any solution $u$ to \eqref{e:weak} is contained in successively better
  spaces as is done in the local case (see \cite{l67}), we utilize the nonlocal nature of the Marchaud derivative to prove uniqueness directly. This is the essence of 
  Lemma \ref{l:psidelta}.

  We define the notation that will be consistent throughout the paper. 
   \begin{itemize}
    \item $\partial_t^{\alpha}$ - the Marchaud derivative as defined in \eqref{e:marchaud}. 
    \item $\alpha$ - the order of the Marchaud derivative. 
    \item $t,s$ - will always be variables reserved as time variables. 
    \item $H^{\alpha/2}(0,T,\mathscr{H})$ -  the fractional Sobolev space as defined in \eqref{e:fracsob}
    \item $a(t, \cdot, \cdot)$ a bounded $\mathscr{V}$-coercive bilinear form. 
   \item $\lambda, \Lambda$ - The coercivity constants for the bilinear form $a(t, u(t),v(t))$, so that for almost all $t$
    \begin{gather*}
        |a(t,u(t),v(t))| \leq \Lambda \| u(t) \|_{\mathscr{V}} \cdot \| v(t) \|_{\mathscr{V}} \quad  \text{ and } \\
        \lambda \| u(t) \|_{\mathscr{V}}^2 \leq |a(t, u(t),u(t))|. 
    \end{gather*}
    \item $\psi$ a cut-off function defined in \eqref{e:psi}. 
    \item The pairing $(\cdot, \cdot)$ will refer to $(\cdot, \cdot)_{\mathscr{H}}$. 
    \item The norm $\| \cdot \|$ will refer to $\| \cdot \|_{\mathscr{H}}$ unless otherwise stated. 
   \end{itemize}
   
% \subsection{Outline}

       \section{Steklov averages}   \label{s:steklov}
          In order to apply the Steklov averages technique, we will utilize the following cut-off function $\psi$ throughout the paper. 
           \begin{equation}   \label{e:psi}
            \begin{aligned}
             (1)&\quad \psi: \R \to \R \text{ and } \psi \in C^{\infty}(\R) \\
             (2)&\quad \psi' \leq 0 \\
             (3)&\quad 0 \leq \psi \leq 1 \\
             (4)&\quad \psi(t)=1 \text{ for } t\leq T-2\epsilon \\
             (5)&\quad  \psi(t)=0 \text{ for } t\geq T-\epsilon.
            \end{aligned}
           \end{equation}
          The quantity $\epsilon>0$ will be made precise later. The following Lemma states what equation $u\psi$ will satisfy. We recall that the pairing
          $(\cdot, \cdot)$ represents $(\cdot,\cdot)_{\mathscr{H}}$. 
          \begin{lemma}   \label{l:cut}
           Let $u$ be a solution to \eqref{e:weak} with right hand side $f \equiv 0$ and initial data $v\equiv 0$. Then $\psi(t) u(t)$ satisfies the equation 
            \begin{equation}  \label{e:transfer}
             \begin{aligned}
              & \int_{-\infty}^T \frac{(\psi(t) u(t), \phi(t))}{(T-t)^{\alpha}} \ dt  
              + \alpha
                 \int_{-\infty}^T \int_{-\infty}^s \frac{(\psi(t) u(t) - \psi(s) u(s), \phi(t)-\phi(s))}{(t-s)^{1+\alpha}} \ ds \ dt  \\
             &\quad  -  \int_{-\infty}^T (\psi(t) u(t), \partial_t^{\alpha} \phi(t)) \ dt  
              + \int_{0}^T a(t, \psi u , \phi) \ dt\\
             &= \alpha \int_{-\infty}^T \int_{-\infty}^t \frac{(\phi(t),u(s)[\psi(t)-\psi(s)])}{(t-s)^{1+\alpha}} \ ds \ dt.         
             \end{aligned}
            \end{equation}
          \end{lemma}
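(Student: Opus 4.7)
The plan is to test the weak formulation \eqref{e:weak} for $u$ against $\psi\phi$ in place of $\phi$, and then rearrange so that the cut-off $\psi$ is transferred from the test function onto $u$, with the resulting commutator producing the right-hand side of \eqref{e:transfer}. Since $\psi\in C^\infty$ equals $1$ on $(-\infty,T-2\epsilon]$ and $0$ on $[T-\epsilon,\infty)$, the product $\psi\phi$ inherits from $\phi$ all the admissibility properties listed after \eqref{e:weak}; in particular $\partial_t^\alpha(\psi\phi)\in L^2(0,T;\mathscr{H})$ follows from the commutator identity
\[
\partial_t^\alpha(\psi\phi)(t) \;=\; \psi(t)\,\partial_t^\alpha\phi(t) \;+\; \alpha\int_{-\infty}^t \frac{\phi(s)[\psi(t)-\psi(s)]}{(t-s)^{1+\alpha}}\,ds,
\]
where $|\psi(t)-\psi(s)|\leq C\min(1,t-s)$ combined with the weighted-$L^2$ decay of $\phi$ at $-\infty$ makes the commutator kernel integrable.

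With $v\equiv 0$ and $f\equiv 0$, the initial-data line and the right-hand side of \eqref{e:weak} both drop out. The boundary term $\int_0^T (u,\psi\phi)(T-t)^{-\alpha}\,dt$ coincides with $\int_{-\infty}^T(\psi u,\phi)(T-t)^{-\alpha}\,dt$ once we use that $u\equiv 0$ on $(-\infty,0]$, and the bilinear-form term becomes $\int_0^T a(t,\psi u,\phi)\,dt$ immediately by linearity in the scalar factor $\psi(t)$. These match the corresponding pieces of \eqref{e:transfer} directly.

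The real work is in the two nonlocal terms. Applying the splitting
\[
\psi(t)\phi(t)-\psi(s)\phi(s) \;=\; \psi(t)\bigl(\phi(t)-\phi(s)\bigr)+\phi(s)\bigl(\psi(t)-\psi(s)\bigr),
\]
and the analogous decomposition of $\psi(t)u(t)-\psi(s)u(s)$, a direct expansion shows that the discrepancy between the two nonlocal numerators satisfies
\[
(\psi(t)u(t)-\psi(s)u(s),\phi(t)-\phi(s))-(u(t)-u(s),\psi(t)\phi(t)-\psi(s)\phi(s)) = [\psi(t)-\psi(s)]\bigl\{(u(s),\phi(t))-(u(t),\phi(s))\bigr\}.
\]
Meanwhile, pairing the commutator identity above with $u$ contributes the residual $\alpha\int_0^T\int_{-\infty}^t\frac{(u(t),\phi(s))[\psi(t)-\psi(s)]}{(t-s)^{1+\alpha}}\,ds\,dt$ in the $-\int(u,\partial_t^\alpha\phi)$ slot. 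When both residuals are moved to the right-hand side with the correct signs, the $(u(t),\phi(s))$ contributions cancel, leaving exactly $\alpha\int_0^T\int_{-\infty}^t \frac{[\psi(t)-\psi(s)](u(s),\phi(t))}{(t-s)^{1+\alpha}}\,ds\,dt$; extending the outer integral to $-\infty$ is costless because $u(s)=0$ for $s\leq 0$, and this is precisely the right-hand side of \eqref{e:transfer}.

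The principal obstacle is this final cancellation: the asymmetric residual produced by splitting the nonlocal bilinear form must be matched by the commutator residual from $\partial_t^\alpha(\psi\phi)$ in just the right way so that only the $(u(s),\phi(t))$ piece survives. Everything else—verifying admissibility of $\psi\phi$, justifying Fubini in the commutator decomposition, and extending integration limits using the vanishing of $u$ on $(-\infty,0]$—is routine given the compact support of $\psi'$ and the weighted-decay hypotheses on $\phi$.
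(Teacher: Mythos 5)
Your proposal is correct and follows essentially the same route as the paper: test \eqref{e:weak} with $\psi\phi$ and transfer $\psi$ onto $u$, with the commutator $\alpha\int_{-\infty}^t\frac{u(s)[\psi(t)-\psi(s)]}{(t-s)^{1+\alpha}}\,ds$ emerging as the right-hand side; your pointwise numerator identity and cancellation is a more direct packaging of the paper's double application of the integration-by-parts formula \eqref{e:byparts3} followed by a density argument. The only substantive point you defer as ``routine'' is precisely the one the paper isolates as the technical crux: showing that $G(t)=\alpha\int_{-\infty}^t\frac{u(s)[\psi(t)-\psi(s)]}{(t-s)^{1+\alpha}}\,ds$ lies in $L^2(0,T;\mathscr{H})$ (using $|\psi(t)-\psi(s)|\leq C\min(1,t-s)$, the support of $\psi'$, and the $L^2$-boundedness of the Riemann--Liouville fractional integral on a bounded interval), which is what legitimizes both your splitting of the nonlocal term into separately convergent pieces and the commutator identity for $\partial_t^{\alpha}(\psi\phi)$.
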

          %Check that I added integrals and alpha everywhere in the double intergral
          \begin{proof}
            We first show how to transfer $\psi$ from $\phi$ to $u$. 
            We note that if $g, \phi \in C^1(-\infty,T;\mathscr{H})$ with $g(t)=0$ for $t\leq 0$, and $\phi(t)=0$ for 
            $t\leq -M$ for some $M$, then 
            \[
             \begin{aligned}
             & \int_{-\infty}^T \frac{(\psi(t) g(t), \phi(t))}{(T-t)^{\alpha}} \ dt + 
             \alpha \int_{-\infty}^T \int_{-\infty}^t \frac{(\psi(t) g(t)-\psi(s) g(s), \phi(t)-\phi(s))}{(t-s)^{1+\alpha}} \ ds \ dt  \\
             &\quad  - \int_{-\infty}^T (\psi(t) g(t),  \partial_t^{\alpha} \phi(t) ) \ dt \\
             &=  \int_{-\infty}^T ( \phi, \partial_t^{\alpha} (g\psi))  \ dt \\
             &= \int_{-\infty}^T (\phi \psi,  \partial_t^{\alpha} g ) \ dt
               + \alpha \int_{-\infty}^T \int_{-\infty}^t \frac{(\phi(t),g(s)[\psi(t)-\psi(s)])}{(t-s)^{1+\alpha}} \ ds \ dt \\
               &= \alpha \int_{-\infty}^T \int_{-\infty}^t \frac{(\phi(t),g(s)[\psi(t)-\psi(s)])}{(t-s)^{1+\alpha}} \ ds \ dt \\
                &\quad + \int_{-\infty}^T \frac{(g(t), \psi(t) \phi(t))}{(T-t)^{\alpha}} \ dt 
                 + \alpha \int_{-\infty}^T \int_{-\infty}^t \frac{(g(t)-g(s) , \psi(t) \phi(t)-\psi(s) u(s))}{(t-s)^{1+\alpha}}  \ ds \ dt\\
                &\quad - \int_{-\infty}^T (g ,\partial_t^{\alpha} (\psi \phi)) \ dt. \\
             \end{aligned}
            \]
           We then have that
           \[
            \begin{aligned}
             & \int_{-\infty}^T \frac{(\psi g,  \phi)}{(T-t)^{\alpha}} \ dt + 
             \alpha \int_{-\infty}^T \int_{-\infty}^t \frac{(g(t)\psi(t)-g(s)\psi(s), \phi(t)-\phi(s))}{(t-s)^{1+\alpha}} \ ds \ dt  \\
             &\quad  - \int_{-\infty}^T (\psi g,  \partial_t^{\alpha} \phi ) \ dt 
              - \alpha \int_{-\infty}^T \int_{-\infty}^t \frac{(\phi(t),g(s)[\psi(t)-\psi(s)])}{(t-s)^{1+\alpha}} \ ds \ dt \\
             &= \int_{-\infty}^T \frac{(g, \psi \phi)}{(T-t)^{\alpha}} \ dt 
                 + \alpha \int_{-\infty}^T \int_{-\infty}^t \frac{(g(t)-g(s), \psi(t)\phi(t)-\psi(s)\phi(s))}{(t-s)^{1+\alpha}} \ ds \ dt \\
                &\quad - \int_{-\infty}^T (g, \partial_t^{\alpha} (\psi \phi))  \ dt. 
             \end{aligned}
           \]
           We now use that $C^1(0,T; \mathscr{H})$ is dense in $H^{\alpha/2}(0,T;\mathscr{H})$ 
           and since $u(t)= 0$ for $t\leq 0$, we may 
           by approximation substitute $u$ for $g$. The technical point is to show that 
           \begin{equation}  \label{e:g}
            G(t):= \alpha \int_{-\infty}^t \frac{u(s)[\psi(t)-\psi(s)]}{(t-s)^{1+\alpha}} \ ds 
           \end{equation}
           is in $L^2(0,T;\mathscr{H})$. 
           We have that 
           \[
            \| G(t) \|_{\mathscr{H}} \leq \int_{-\infty}^t \frac{\|u(s)\|_{\mathscr{H}}|\psi(t)-\psi(s)|}{(t-s)^{1+\alpha}} \ ds 
            \leq C \int_{T-2\epsilon}^t \frac{\| u(s) \|_{\mathscr{H}}}{(t-s)^{\alpha}} \ ds.  
           \]
            By using Lemma 4.26 from \cite{r96} and the fact that $[T-2\epsilon,T]$ is a finite interval, we have that
           \[
            \int_{T-2\epsilon}^T \left(\int_{T-2\epsilon}^t \frac{\| u(s) \|_{\mathscr{H}}}{(t-s)^{\alpha}} \ ds \right)^2 < \infty. 
           \] 
           Then $G(t) \in L^2(0,T;\mathscr{H})$, and by approximation we obtain that \eqref{e:transfer} holds for $u$. 
           
           Finally, since $\psi(t)$ is constant for each fixed $t$, we have that $a(t,u,\psi \phi)=a(t,\psi u,\phi)$. By integrating
           in time and applying the conclusion from above, we obtain \eqref{e:transfer}.
        \end{proof}

          We define the Steklov averages 
           \[
            \begin{aligned}
                 \eta_{\overbar{h}} &= \frac{1}{h} \int_{t-h}^t \eta\\
                 \eta_{h} &= \frac{1}{h} \int_{t}^{t+h} \eta .
            \end{aligned}
           \]
         Notice that if $\eta(t)$ and $u_h$ both vanish in $[t-2h,T]$, then 
          \begin{equation}  \label{e:transfer2}
           \int_{-\infty}^T (u(t), \eta_{\overbar{h}}(t)) \ dt 
            = \int_{-\infty}^{T-h} (u_h(t) , \eta(t) ) \ dt = \int_{-\infty}^T (u_h(t), \eta(t)) \ dt. 
          \end{equation}

          \begin{lemma}   \label{l:switch}
           If $\partial_t^{\alpha} \eta \in L^2(-\infty,T; \mathscr{H}) $,  and $\eta(t)=0$ for $t<-M$ for some $M>0$, 
           then $\partial_t^{-1} \partial_t^{\alpha} \eta = \partial_t^{\alpha} \partial_t^{-1} \eta$ in $L^2(-\infty,T;\mathscr{H})$ where 
            \[
             \partial_t^{-1} \eta := \int_{-\infty}^t \eta(\tau) \ d \tau. 
            \]
          \end{lemma}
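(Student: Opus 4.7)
Morally, $\partial_t^{\alpha}$ and $\partial_t^{-1}$ are both convolutions with kernels supported in $[0,\infty)$, so they commute whenever Fubini applies. My plan is to verify the identity explicitly for smooth approximants by showing that both compositions equal the left Riemann--Liouville fractional integral $I^{1-\alpha}\eta(t):=\int_{-\infty}^t (t-\tau)^{-\alpha}\eta(\tau)\,d\tau$, and then to pass to the limit by mollification in time.

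\textbf{Step 1 (smooth case).} Assume first that $\eta$ is smooth and vanishes for $t\le -M$. Directly from the Marchaud definition \eqref{e:marchaud} and the fact that $\partial_t^{-1}\eta(t)-\partial_t^{-1}\eta(s)=\int_s^t \eta(u)\,du$,
\[
\partial_t^{\alpha}\partial_t^{-1}\eta(t)=\alpha\int_{-\infty}^t(t-s)^{-1-\alpha}\int_s^t\eta(u)\,du\,ds.
\]
Apply Fubini (the double integral converges absolutely because $\eta$ has left-compact support) to swap $s$ and $u$, then evaluate the inner integral $\alpha\int_{-\infty}^u(t-s)^{-1-\alpha}\,ds=(t-u)^{-\alpha}$ to conclude $\partial_t^{\alpha}\partial_t^{-1}\eta(t)=I^{1-\alpha}\eta(t)$. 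For the reversed composition, use integration by parts in \eqref{e:shortM} to rewrite $\partial_\tau^{\alpha}\eta(\tau)=\int_{-M}^\tau \eta'(s)(\tau-s)^{-\alpha}\,ds$ (valid because $\eta(-M)=0$), integrate in $\tau$, and apply Fubini again to obtain $(1-\alpha)^{-1}\int_{-M}^t \eta'(s)(t-s)^{1-\alpha}\,ds$; a final integration by parts (using $\eta(-M)=0$ and $\alpha<1$) reduces this to $I^{1-\alpha}\eta(t)$.

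\textbf{Step 2 (density) and main obstacle.} For general $\eta$ satisfying the hypotheses, set $\eta_\e := \eta * \rho_\e$, where $\rho_\e$ is a smooth mollifier supported in $(-\e,0)$; then $\eta_\e$ is smooth and still vanishes for $t\le -M$. Since $\partial_t^{\alpha}$ and $\partial_t^{-1}$ are translation-invariant, they commute with $\rho_\e$, so the identity from Step 1 holds for each $\eta_\e$. The delicate point is passage to the limit: the standard convolution estimate gives $\partial_t^{\alpha}\eta_\e=(\partial_t^{\alpha}\eta)*\rho_\e\to\partial_t^{\alpha}\eta$ in $L^2(-\infty,T;\mathscr{H})$, and $\partial_t^{-1}$ is $L^2$-bounded on the effective support $[-M,T]$, so the left-hand sides converge; for the right-hand sides I additionally need $L^2$-continuity of the one-sided Riemann--Liouville integral on a finite interval (via Hardy--Littlewood, compare Lemma~4.26 of \cite{r96}) to conclude that $\partial_t^{\alpha}\partial_t^{-1}\eta_\e\to\partial_t^{\alpha}\partial_t^{-1}\eta$ in $L^2(-\infty,T;\mathscr{H})$. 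This continuous dependence is the main technicality; once in hand, taking $\e\to 0$ in the identity for $\eta_\e$ yields the result for $\eta$.
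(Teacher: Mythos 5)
Your proof is correct and follows essentially the same route as the paper: verify for smooth $\eta$ vanishing for $t\le -M$ that both compositions equal the Riemann--Liouville integral $\int_{-\infty}^t (t-\tau)^{-\alpha}\eta(\tau)\,d\tau$ (via Fubini and one integration by parts), then conclude by density in $L^2(-\infty,T;\mathscr{H})$. Your Step 2 merely spells out the approximation (mollification, translation invariance, $L^2$-boundedness of the one-sided fractional integral on a bounded interval) that the paper compresses into a one-line appeal to density.
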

          
          \begin{proof} 
          We first assume that $\eta \in C^1(-\infty,T; \mathscr{H})$ and use the notion of Caputo derivative given in \eqref{e:caputo}. 
           Then 
            \[
             \partial_t^{\alpha} \partial_t^{-1} \eta = c_{\alpha} \int_{-\infty}^t \frac{\eta(s)}{(t-s)^{\alpha}} \ ds.
            \]
            Now 
            \[
             \begin{aligned}
             \partial_t^{-1} \partial_t^{\alpha} \eta &= c_{\alpha}\int_{-\infty}^t \int_{-\infty}^{\tau} \frac{\eta'(s)}{(\tau - s)^{\alpha}} \ ds \ d\tau \\
               &=c_{\alpha}\int_{-\infty}^{t} \int_{s}^{t} \frac{\eta'(s)}{(\tau - s)^{\alpha}} \ d\tau \ ds \\ 
               &= \frac{-c_{\alpha}}{1-\alpha}\int_{-\infty}^{t} \int_{\tau}^{t} \eta'(s)(t-s)^{1-\alpha} \ ds \\
               &= c_{\alpha}\int_{-\infty}^{t}  \frac{\eta(s)}{(t-s)^{\alpha}} \ ds. 
             \end{aligned}
            \]
            Alternatively, one may use the Laplace transform
             \[
               \mathcal{L}[ \partial_t^{-1} \partial_t^{\alpha} \eta ] 
               = s \mathcal{L}[ \partial_t^{\alpha} \eta ]
               = s^{1}s^{-\alpha}\mathcal{L}[  \eta ]
               = s^{-\alpha} \mathcal{L} [\partial_t^{-1} \eta]
               = \mathcal{L}[ \partial_t^{\alpha} \partial_t^{-1}\eta ]. 
             \]
             We then use that $C^1(-\infty,T; \mathscr{H})$ is dense in $L^2(-\infty,T; \mathscr{H})$. 
          \end{proof}

          \begin{lemma}  \label{e:stek}
           If $\eta \in L^2(-\infty,T;\mathscr{H})$ and $\eta(t)=0$ if $t\leq -M$ for some $M$, then 
            \[
               \partial_t^{\alpha} \eta_{\overbar{h}}(t) = (\partial_t^{\alpha} \eta)_{\overbar{h}}(t)
            \]
          \end{lemma}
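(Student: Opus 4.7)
The plan is to prove the identity first for $\eta$ smooth and compactly supported in $(-M,T)$, where every quantity is classical, and then extend to general $\eta \in L^2(-\infty,T;\mathscr{H})$ vanishing on $(-\infty,-M]$ by a density argument, with convergence of the Steklov averages in $L^2$ and the identity for $\partial_t^{\alpha}$ interpreted distributionally when necessary.

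For smooth compactly supported $\eta$, I would write $\eta_{\overbar{h}}(t) = \frac{1}{h}\int_0^h \eta(t-\tau)\, d\tau$ and substitute directly into the Marchaud formula \eqref{e:marchaud}. The integrand defining $\partial_t^{\alpha}\eta_{\overbar{h}}(t)$ then becomes $\frac{1}{h}\int_0^h [\eta(t-\tau)-\eta(s-\tau)]/(t-s)^{1+\alpha}\, d\tau$, which is absolutely integrable in $(\tau,s)$ by smoothness and compact support. Fubini permits pulling the $\tau$-integral outside, and the change of variables $s\mapsto s-\tau$ in the inner integral converts it into $\partial_t^{\alpha}\eta(t-\tau)$. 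What remains is $\frac{1}{h}\int_0^h \partial_t^{\alpha}\eta(t-\tau)\, d\tau = (\partial_t^{\alpha}\eta)_{\overbar{h}}(t)$.

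A cleaner structural proof uses Lemma \ref{l:switch}: since $\eta$ vanishes on $(-\infty,-M]$, the telescoping identity $\eta_{\overbar{h}}(t) = \frac{1}{h}\bigl[\partial_t^{-1}\eta(t)-\partial_t^{-1}\eta(t-h)\bigr]$ holds; the Marchaud derivative is translation invariant (evident from \eqref{e:marchaud} after shifting $s$), so $\partial_t^{\alpha}$ commutes with the finite difference; Lemma \ref{l:switch} then lets $\partial_t^{\alpha}$ and $\partial_t^{-1}$ swap, and collapsing the finite difference of $\partial_t^{-1}\partial_t^{\alpha}\eta$ recovers $(\partial_t^{\alpha}\eta)_{\overbar{h}}(t)$. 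Either route gives the same result for smooth data.

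The main obstacle will be the passage from smooth $\eta$ to general $\eta \in L^2$, since $\partial_t^{\alpha}\eta$ need not be a locally integrable function. I would approximate $\eta$ by smooth compactly supported $\eta_n$, note that $(\eta_n)_{\overbar{h}} \to \eta_{\overbar{h}}$ in $L^2(-\infty,T;\mathscr{H})$ because Steklov averaging is a contraction, and test both sides of the identity for $\eta_n$ against smooth test functions compactly supported in $(-\infty,T)$. The integration by parts formula \eqref{e:byparts} transfers $\partial_t^{\alpha}$ onto the test function, making each side continuous under $\eta_n \to \eta$ in $L^2$, so the identity for smooth functions passes to the limit and yields the stated equality in the appropriate distributional sense, coinciding with the pointwise identity whenever both sides are genuine functions.
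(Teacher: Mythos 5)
Your proposal is correct, and your second (``structural'') route is precisely the paper's proof: the paper writes $\eta_{\overbar{h}} = \frac{1}{h}\int_{-\infty}^t \eta - \frac{1}{h}\int_{-\infty}^{t-h}\eta$ and invokes Lemma \ref{l:switch} to commute the antiderivative with the Marchaud derivative. Your direct Fubini computation and your explicit density/distributional limiting step are consistent with, and somewhat more careful than, the paper's two-line argument.
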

          
          \begin{proof}
          We notice that 
           \[
            \eta_{\overbar{h}} = \frac{1}{h} \int_{-\infty}^t \eta \ d \tau - \frac{1}{h} \int_{-\infty}^{t-h} \eta \ d \tau.
           \]
           From Lemma \ref{l:switch} the anti-derivative commutes with the Marchaud derivative. 
          \end{proof}
          
          \begin{lemma}  \label{l:conve1}
           Let $ f \in L^2(0,T;\mathscr{V})$ (or $f \in L^2(0,T;\mathscr{H})$). Extend $f(x,t)=0$ for $t \notin [0,T]$. 
           Then $f_h \to f$ in $L^2(0,T;\mathscr{V})$ (or  in $L^2(0,T;\mathscr{H})$).
          \end{lemma}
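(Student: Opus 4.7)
The plan is to view Lemma \ref{l:conve1} as the Hilbert-space-valued analogue of the classical convergence of Steklov averages in $L^p$ of scalar functions, and to prove it by the standard approximation argument: first establish that the averaging map $f \mapsto f_h$ is a contraction on $L^2(\mathbb{R}; X)$ (where $X$ stands for either $\mathscr{V}$ or $\mathscr{H}$), then verify the conclusion by hand on a dense subclass of well-behaved functions, and finally combine the two via the triangle inequality. Throughout, I will work with $f$ extended by $0$ outside $[0,T]$, so that $f \in L^2(\mathbb{R}; X)$, and I will use that $f_h$ is supported in $[-h, T]$, so controlling $\|f_h - f\|_{L^2(0,T; X)}$ reduces to controlling $\|f_h - f\|_{L^2(\mathbb{R}; X)}$.

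The first step is the contraction estimate. By Minkowski's inequality for integrals (equivalently, applying Cauchy--Schwarz inside the $X$-norm), one has
\[
 \| f_h(t) \|_X^2 \leq \frac{1}{h} \int_t^{t+h} \| f(\tau) \|_X^2 \ d\tau.
\]
Integrating this in $t$ over $\mathbb{R}$ and exchanging the order of integration via Fubini yields $\|f_h\|_{L^2(\mathbb{R}; X)} \leq \|f\|_{L^2(\mathbb{R}; X)}$. Thus the Steklov averaging operator is bounded by $1$ on $L^2(\mathbb{R}; X)$, uniformly in $h$.

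The second step is to check the conclusion on a dense subclass. For $g \in C_c(\mathbb{R}; X)$ (continuous $X$-valued functions with compact support in $\mathbb{R}$), the function $g$ is uniformly continuous into $X$, so $g_h \to g$ uniformly on $\mathbb{R}$ as $h \to 0$, and since all $g_h$ are supported in a common compact set, this gives $g_h \to g$ in $L^2(\mathbb{R}; X)$. The density of $C_c(\mathbb{R}; X)$ in $L^2(\mathbb{R}; X)$ is a standard fact for separable Hilbert spaces $X$, which is our setting.

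The third step is the approximation argument. Given $f \in L^2(\mathbb{R}; X)$ and $\varepsilon > 0$, choose $g \in C_c(\mathbb{R}; X)$ with $\|f - g\|_{L^2(\mathbb{R}; X)} < \varepsilon$. The triangle inequality and the contraction estimate give
\[
 \|f_h - f\|_{L^2(\mathbb{R}; X)} \leq \|(f-g)_h\|_{L^2(\mathbb{R}; X)} + \|g_h - g\|_{L^2(\mathbb{R}; X)} + \|g - f\|_{L^2(\mathbb{R}; X)} \leq 2\varepsilon + \|g_h - g\|_{L^2(\mathbb{R}; X)},
\]
and the middle term tends to $0$ by the previous step. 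Restricting to $[0,T]$ yields the statement. There is no substantive obstacle here beyond being careful that the classical scalar-valued arguments transfer to the Hilbert-space-valued setting; the only points to verify are Minkowski's integral inequality for $X$-valued integrands (which holds since $X$ is a Hilbert space) and the density of $C_c$ functions in $L^2(\mathbb{R}; X)$ (which holds since $X$ is separable).
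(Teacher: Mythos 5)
Your proof is correct, but it takes a genuinely different route from the paper's. You use the classical approximation scheme: the Steklov averaging operator is a contraction on $L^2(\mathbb{R};X)$ (Cauchy--Schwarz plus Fubini), the convergence $g_h \to g$ is immediate for uniformly continuous compactly supported $g$, and density of $C_c(\mathbb{R};X)$ plus the triangle inequality transfers the conclusion to general $f$. The paper instead argues via norm convergence: it establishes the pointwise bound $(f_h,f_h)_{\mathscr{V}} \leq ((f,f)_{\mathscr{V}})_h$, computes directly that $\|((f,f))_h\|_{L^1(0,T)} \to \|(f,f)\|_{L^1(0,T)}$, invokes pointwise a.e.\ convergence of the averages, and then uses the generalized dominated convergence theorem to get $\|f_h\|_{L^2(0,T;\mathscr{V})} \to \|f\|_{L^2(0,T;\mathscr{V})}$, from which strong convergence ``follows.'' Your argument is arguably cleaner: the paper's last step silently relies on the fact that pointwise a.e.\ convergence (or weak convergence) together with convergence of norms implies strong convergence in $L^2(0,T;\mathscr{V})$, a Radon--Riesz/Scheff\'e-type fact that is true but left implicit, whereas your density argument needs nothing beyond the contraction bound. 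What the paper's route buys is the intermediate identity for $\int_0^T \frac{1}{h}\int_t^{t+h}(f(s),f(s))\,ds\,dt$, which is explicitly reused in the proof of Lemma \ref{l:conve2}; if one adopted your proof wholesale, that auxiliary fact would need to be recorded separately (it follows from your Fubini computation in the contraction step in any case).
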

        
          \begin{proof}
           We suppose that $f \in L^2(0,T;\mathscr{V})$, and remark that the proof when $f \in L^2(0,T;\mathscr{H})$ is the same. 
            \begin{equation}   \label{e:dct}
             \begin{aligned}
              (f_h(t),f_h(t))_{\mathscr{V}}&=  \left(\frac{1}{h}\int_{t}^{t+h} f(s) \ ds, \frac{1}{h}\int_{t}^{t+h} f(s) \ ds \right)_{\mathscr{V}}  \\
             &\leq  \frac{1}{h}\int_{t}^{t+h} (f(s),f(s))_{\mathscr{V}} \ ds  \\
             &= ((f(t),f(t))_{\mathscr{V}})_h.
             \end{aligned}
            \end{equation}
          Now since $f(t)=0$ for $t \notin [0,T]$ we have by changing the order of integration that
          \begin{equation}  \label{e:equal}
           \begin{aligned}
             \int_{0}^T \frac{1}{h}\int_{t}^{t+h} (f(s),f(s)) \ ds \ dt  
            &=  \int_{0}^{T+h} \frac{1}{h}\int_{\max\{s-h,0\}}^{\max\{s-h,T\}} (f(s),f(s)) \ dt \ ds  \\
            &=  \int_{0}^{T}  (f(s),f(s)) \ ds  \\
            &\quad + \frac{1}{h}\int_{T}^{T+h} \int_{s-h}^{\max\{s+h,T\}} (f(s),f(s)) \ ds. 
           \end{aligned}
          \end{equation}
         Thus $\displaystyle \| ((f,f))_h\|_{L^1(0,T; \mathscr{V})} \to \| (f,f) \|_{L^1(0,T;\mathscr{V})}$. Also we have that   $((f,f))_h \to (f,f)$ pointwise for almost everywhere
         $t$. Now  
          $(f_h,f_h) \to (f,f)$ pointwise for almost every $t$ and $(f_h,f_h) \leq ((f,f))_h$ from \eqref{e:dct}. Then  
           from the Lebesgue dominated convergence theorem for the Bochner intergral, we conclude that
          $\|f_h \|_{L^2(0,T;\mathscr{V})} \to \| f \|_{L^2(0,T;\mathscr{V})}$. It then follows that  $f_h \to f$ in $L^2(0,T;\mathscr{V})$. 
          \end{proof}
        
         \begin{lemma}   \label{l:conve2}
          Let $a(t,\cdot, \cdot)$ be a bounded $\mathscr{V}$-coercive bilinear form. Let $u \in L^2(0,T;\mathscr{V})$ with $u(x,t)=0$ for $t \notin [0,T]$.
          Then 
           \begin{equation}  \label{e:dom}
            \lim_{h \to 0} \int_0^T \frac{1}{h} \int_s^{s+h} a(t,\psi u(t), \psi u_h(s)) \ dt \ ds = \int_0^T a(t,\psi u(t),\psi u(t)) \ dt.
           \end{equation}
         \end{lemma}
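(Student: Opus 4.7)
The plan is to split the integrand via bilinearity of $a$ as
\[
a(t,\psi u(t),\psi u_h(s)) \;=\; a(t,\psi u(t),\psi u(t)) + a\bigl(t,\psi u(t),\psi(s)u_h(s)-\psi(t)u(t)\bigr).
\]
For the first piece, Fubini yields $\int_0^T h^{-1}\int_s^{s+h} a(t,\psi u(t),\psi u(t))\,dt\,ds = \int_0^T F_h(s)\,ds$, where $F(t):=a(t,\psi u(t),\psi u(t))$ belongs to $L^1(0,T)$ by boundedness of $a$ and the fact that $\psi u\in L^2(0,T;\mathscr{V})$. Standard Lebesgue differentiation gives $F_h\to F$ in $L^1(0,T)$, so this piece already produces the desired limit $\int_0^T a(t,\psi u(t),\psi u(t))\,dt$.

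It therefore suffices to show $E(h)\to 0$, where
\[
E(h):=\int_0^T \frac{1}{h}\int_s^{s+h} a\bigl(t,\psi u(t),\psi(s)u_h(s)-\psi(t)u(t)\bigr)\,dt\,ds.
\]
Using the boundedness constant $\Lambda$ and the Cauchy--Schwarz inequality against the measure $h^{-1}\chi_{\{s\le t\le s+h\}}\,dt\,ds$ gives $|E(h)|\le \Lambda\, A(h)^{1/2}\, B(h)^{1/2}$, where $A(h):=\int_0^T h^{-1}\int_s^{s+h}\|\psi u(t)\|_{\mathscr{V}}^2\,dt\,ds$ and $B(h)$ is the analogous average of $\|\psi(s)u_h(s)-\psi(t)u(t)\|_{\mathscr{V}}^2$. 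A Fubini computation in the spirit of \eqref{e:equal} gives $A(h)\le \|\psi u\|_{L^2(0,T;\mathscr{V})}^2$ uniformly in $h$, so the task reduces to proving $B(h)\to 0$.

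To handle $B(h)$ I further decompose
\[
\psi(s)u_h(s)-\psi(t)u(t) \;=\; \psi(s)\bigl(u_h(s)-u(t)\bigr) + \bigl(\psi(s)-\psi(t)\bigr)u(t).
\]
Since $|\psi(s)-\psi(t)|\le \|\psi'\|_\infty h$ for $t\in[s,s+h]$, the second summand contributes at most $\|\psi'\|_\infty^2 h^2 \|u\|_{L^2(0,T;\mathscr{V})}^2$, which vanishes as $h\to 0$. For the first summand, Jensen's inequality gives $\|u_h(s)-u(t)\|_{\mathscr{V}}^2 \le h^{-1}\int_s^{s+h}\|u(r)-u(t)\|_{\mathscr{V}}^2\,dr$; substituting $r=s+r'$, $t=s+t'$ and swapping the order of integration dominates this contribution by $\sup_{|r'|,|t'|\le h}\|u(\cdot+r')-u(\cdot+t')\|_{L^2(\mathbb{R};\mathscr{V})}^2$ (with $u$ extended by zero). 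The main ingredient is now continuity of translation in the Bochner space $L^2(\mathbb{R};\mathscr{V})$, which forces this quantity to $0$; this is the one genuinely analytic step, and it follows in the usual way from density of continuous, compactly supported $\mathscr{V}$-valued functions. Everything else is bookkeeping, with boundary effects near $t=T$ neutralized by the fact that $\psi(t)=0$ for $t\ge T-\epsilon$.
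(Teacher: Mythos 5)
Your proof is correct, but it takes a genuinely different route from the paper's. The paper's argument is a domination argument: it bounds the double average by $\Lambda^2\int_0^T\|\psi u_h(s)\|^2\,ds+\int_0^T h^{-1}\int_s^{s+h}\|\psi u(t)\|^2\,dt\,ds$ via Young's inequality, shows both dominating quantities converge to $\int_0^T\|\psi u\|^2$ using Lemma \ref{l:conve1} and the computation \eqref{e:equal}, and then invokes a (generalized) dominated convergence theorem to pass to the limit in \eqref{e:dom}. You instead split off the limiting term exactly by bilinearity, reduce the remainder $E(h)$ to the single quantity $B(h)$ via Cauchy--Schwarz against the averaging measure, and kill $B(h)$ by Jensen plus continuity of translations in $L^2(\mathbb{R};\mathscr{V})$. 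What your approach buys is a fully quantitative error bound and, in particular, it sidesteps the one point the paper leaves implicit, namely the pointwise a.e.\ convergence of the averaged integrand $h^{-1}\int_s^{s+h}a(t,\psi u(t),\psi u_h(s))\,dt$ to $a(s,\psi u(s),\psi u(s))$, which a dominated convergence argument needs but which is not entirely trivial since the second slot also varies with $h$; the paper's approach, in exchange, recycles Lemma \ref{l:conve1} directly and is shorter on the page. Two small remarks: the notation $\psi u_h(s)$ in the statement is read in the main theorem's proof as $(\psi u)_h(s)$ rather than your $\psi(s)u_h(s)$, but your argument adapts immediately (indeed more simply, since the $(\psi(s)-\psi(t))u(t)$ term disappears and you apply the translation estimate to $\psi u$ in place of $u$); and your bound on that term uses $\|\psi'\|_\infty$, which is finite for the smooth cutoff \eqref{e:psi} and still finite (of size $\delta^{-1}$) for the piecewise linear $\psi_\delta$ used later, so nothing breaks in the application since $h\to 0$ is taken at fixed $\delta$.
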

        
         \begin{proof}
          If $\| \cdot\|$ represents the norm in the space $\mathscr{V}$, then  
           \[
            \begin{aligned}
            &\left| \int_0^T \frac{1}{h} \int_s^{s+h} a(t,\psi u(t), \psi u_h(s)) \ dt \ ds \right| \\
            &\quad\leq \int_0^T \frac{1}{h} \int_{s}^{s+h}\Lambda \|\psi u(t) \| \|\psi u_h(s) \| \ dt \ ds  \\
            &\leq \Lambda^2 \int_0^T \frac{1}{h} \int_{s}^{s+h} \| \psi u_h(s) \|^2 \ dt \ ds  
             +  \int_0^T \frac{1}{h} \int_{s}^{s+h} \|\psi u(t) \|^2 \ dt  \ ds. \\
             &= \Lambda^2 \int_0^T  \| \psi u_h(s) \|^2  \ ds + \int_0^T \frac{1}{h} \int_{s}^{s+h} \|\psi u(t) \|^2 \ dt  \ ds. \\
            \end{aligned}
           \]
            From Lemma \ref{l:conve1} we have that 
           \[
            \lim_{h \to 0} \int_0^T \| u_h(s) \|^2 \ ds = \int_{0}^T \| u \|^2 \ ds, 
           \]
           so it will also be true that 
           \[
           \lim_{h \to 0} \int_0^T \| \psi u_h(s) \|^2 \ ds = \int_{0}^T \| \psi u(s) \|^2 \ ds. 
           \]
           Furthermore, it was shown in the proof of Lemma \ref{l:conve1} that 
           \[
            \lim_{h \to 0} \int_0^T \frac{1}{h} \int_{s}^{s+h} \|\psi u(t) \|^2 \ dt  \ ds = \int_0^T \| \psi u(s) \|^2 \ ds. 
           \]
            From the dominated 
           convergence theorem for the Bochner integral, we conclude \eqref{e:dom} is true. 
          \end{proof}
        
         Before ending this section we establish one more identity. 
         We note that if $u,\eta \in C^1(-\infty,T;\mathscr{H})$  and have compact support in $(-\infty,T-2h)$, then from \eqref{e:transfer2} and \eqref{e:stek} we obtain
         \[
          \begin{aligned}
          &\int_{-\infty}^T (u, \partial_t^{\alpha} \eta_{\overbar{h}})+ ((\partial_t^{\alpha}u), \eta_{\overbar{h}}) \ dt \\
          &= \int_{-\infty}^T (u ,  (\partial_t^{\alpha} \eta)_{\overbar{h}}) + ((\partial_t^{\alpha}u)_h ,\eta) \ dt \\
          &= \int_{-\infty}^T (u_h, \partial_t^{\alpha} \eta ) + ((\partial_t^{\alpha}u)_h, \eta )\ dt \\
          \end{aligned}
         \]
         As a consequence, we immediately have the equality 
         \begin{equation}   \label{e:2transf}
          \begin{aligned}
          &\int_{-\infty}^T \frac{(u ,\eta_{\overbar{h}})}{(T-t)^{\alpha}} \ dt 
           + \alpha \int_{-\infty}^T \int_{-\infty}^t \frac{(u(t)-u(s),\eta_{\overbar{h}}(t)-\eta_{\overbar{h}}(s))}{(t-s)^{1+\alpha}} \ ds \ dt \\
           &=  \int_{-\infty}^T \frac{(u_h ,\eta)}{(T-t)^{\alpha}} \ dt 
           + \alpha \int_{-\infty}^T \int_{-\infty}^t \frac{(u_h(t)-u_h(s),\eta(t)-\eta(s))}{(t-s)^{1+\alpha}} \ ds \ dt.
           \end{aligned}
         \end{equation}
         If $u \in H^{\alpha/2}(0,T;\mathscr{H})$ with $u(t)=0$ for $t\leq 0$ and $T-2h\leq t \leq T$, then by approximation \eqref{e:2transf} will also hold.

           \section{Uniqueness}   \label{s:unique}
            In order to prove uniqueness of solutions, we will utilize the nonlocal structure of the Marchaud derivative. This is the content of the next Lemma. 
            We will use a specific cut-off function $\psi_{\delta}$ which is piecewise linear. 
              \begin{equation}  \label{e:psidelta}
              \psi_{\delta}(t):=
               \begin{cases}
                1 &\text{ if   } \  t\leq T- \epsilon -\delta \\
                \delta^{-1}(T-\epsilon-t) &\text{ if   } \ T-\epsilon -\delta< t < T-\epsilon \\
                0 &\text{ if   } \  t\geq T- \epsilon.  
               \end{cases}
             \end{equation}

           \begin{lemma}   \label{l:psidelta}
            Let $u \in L^2(0,T;\mathscr{H})$, and let $u(t)=0$ for $t\leq 0$. 
            Then for almost all $\epsilon \in (0,T)$, if  $\psi_{\delta}$ is defined as in \eqref{e:psidelta}, then
             \begin{equation}  \label{e:deltazero}
               \lim_{\delta \to 0} \int_{-\infty}^T \int_{-\infty}^t \frac{(\psi_{\delta} u(t), u(s)[\psi_{\delta}(t)-\psi_{\delta}(s)])}{(t-s)^{1+\alpha}} \ ds \ dt = 0. 
             \end{equation}
           \end{lemma}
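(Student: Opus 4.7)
The plan is to exploit the support of $\psi_\delta$ and its Lipschitz structure. Writing $a := T - \epsilon - \delta$ and $b := T - \epsilon$, one sees that the integrand vanishes outside the strip $t \in (a, b)$, since $\psi_\delta(t) = 0$ for $t \geq b$ and $\psi_\delta(t) - \psi_\delta(s) = 0$ whenever both $s, t \leq a$. Moreover, $\psi_\delta$ is monotone nonincreasing and $1/\delta$-Lipschitz, so $|\psi_\delta(t) - \psi_\delta(s)| \leq \min(1, (t-s)/\delta)$ for $s \leq t$.

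I would split the inner integral at $s = a$ into a ``near'' piece on $s \in (a, t)$ and a ``far'' piece on $s \leq a$. On the near piece the Lipschitz bound is attained as an equality, so the factor $(t-s)/\delta$ cancels one power of $(t-s)$ in the kernel, leaving the locally integrable weight $\delta^{-1}(t-s)^{-\alpha}$. Applying $|(u(t), u(s))| \leq \tfrac{1}{2}(\|u(t)\|^2 + \|u(s)\|^2)$ together with Fubini bounds the near piece by $C\delta^{-\alpha}\int_a^b \|u(t)\|^2\, dt$. Since $\|u\|^2 \in L^1(0, T)$, for almost every $\epsilon$ the point $b$ is a Lebesgue point of $\|u\|^2$, so $\int_a^b \|u\|^2 = O(\delta)$ as $\delta \to 0$, yielding a near-piece bound of $O(\delta^{1-\alpha}) \to 0$.

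For the far piece with $s \leq a$, the coefficient $\psi_\delta(t)[\psi_\delta(t) - 1] = -(b-t)(t-a)/\delta^2$ vanishes at both $t = a$ and $t = b$ and never exceeds $1/4$. I would combine this double vanishing with Cauchy--Schwarz in $s$ and a further splitting of the $s$-integral at an intermediate scale such as $a - s \approx \sqrt\delta$, separating the mildly singular regime from the critically singular one. The Lebesgue point property at $b$ controls $\|u\|_{L^2(a-\delta, a)} = O(\sqrt\delta)$, handling the contribution from $s$ close to $a$, while the bulk $s$-integral is handled by standard fractional-integral estimates on $(t-s)^{-1-\alpha}$.

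The hard part will be the far piece, especially when $\alpha \geq 1/2$: the kernel $(t-s)^{-1-\alpha}$ is most singular when both $s$ and $t$ approach $a$, and naive Cauchy--Schwarz with the global $L^2$-norm of $u$ on $(0,a)$ introduces non-integrable singularities. The resolution is to exploit both vanishing factors in the coefficient simultaneously: the factor $(t-a)/\delta$ absorbs the $s$-singularity near $s = a$, while the factor $(b-t)/\delta$ shrinks the effective $t$-integral, with Lebesgue point control at $b$ supplying the smallness of $\|u\|_{L^2}$ on the shrinking intervals near $b$ needed to close the estimate.
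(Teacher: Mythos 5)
Your reduction to the strip $t\in(a,b)$ with $a=T-\epsilon-\delta$, $b=T-\epsilon$, and your treatment of the near piece $s\in(a,t)$ are correct; the symmetrization $|(u(t),u(s))|\le\tfrac12(\|u(t)\|^2+\|u(s)\|^2)$ together with the Lebesgue point of $\|u\|^2$ at $b$ does give $O(\delta^{1-\alpha})$, and this is if anything cleaner than the corresponding step in the paper. The gap is in the far piece $s\le a$. After absorbing the factor $(t-a)/\delta$ via $t-a\le t-s$ and using $(b-t)/\delta\le 1$, what remains is essentially $\delta^{-1}\int_a^b\|u(t)\|\int_0^a\|u(s)\|(t-s)^{-\alpha}\,ds\,dt$, and a Lebesgue-point argument at $b$ sends this to $\|u(b)\|\int_0^{b}\|u(s)\|(b-s)^{-\alpha}\,ds$, which is finite but \emph{not} zero. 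So an extra mechanism is needed to gain smallness, and the one you propose does not close. Concretely, splitting the $s$-integral at $a-s\approx\rho$ imposes two incompatible requirements when $\alpha$ is close to $1$: the contribution from $s\in(a-\rho,a)$, estimated by Cauchy--Schwarz and the Lebesgue-point bound $\|u\|_{L^2(a-\rho,a)}=O(\sqrt{\rho+\delta})$, forces roughly $\rho\ll\delta^{2\alpha-1}$, while the bulk $s\le a-\rho$, estimated by Young's inequality for the kernel truncated at distance $\rho$ (norm $\sim\rho^{-\alpha}$), forces roughly $\rho\gg\delta^{1/(2\alpha)}$. These windows are disjoint for $\alpha$ near $1$, and your specific choice $\rho=\sqrt\delta$ already fails for $\alpha\ge 3/4$.

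The device you are missing is the paper's iterated limit with a cutoff $t_0$ that is \emph{fixed independently of} $\delta$. Write $F(t,t_0)=\int_{t_0}^t\|u(s)\|(t-s)^{-\alpha}\,ds$ and choose $\epsilon$ (from a full-measure set) so that $F(T-\epsilon,0)<\infty$ and $T-\epsilon$ is a Lebesgue point of the relevant functions. For $s\le t_0$ the kernel is bounded by a constant depending on $t_0$, so that contribution is $O(\delta)$; for $s\in(t_0,t)$ the whole inner integral is bounded by $\delta^{-1}F(t,t_0)$, and letting $\delta\to0$ first gives the limit $\|u(T-\epsilon)\|\,F(T-\epsilon,t_0)$. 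Only then does one let $t_0\to T-\epsilon$, where $F(T-\epsilon,t_0)\to0$ by absolute continuity of the integral, precisely because $F(T-\epsilon,0)<\infty$. This two-parameter limit, rather than a single $\delta$-dependent splitting scale, is the step you would need to add to make the far piece work for all $\alpha\in(0,1)$.
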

           
           \begin{proof}
             Throughout this proof the norm $\| \cdot \|$ will refer to $\| \cdot \|_{\mathscr{H}}$.        
             We have
             \begin{equation}  \label{e:delta1}
              \begin{aligned}
              &\left| \int_{-\infty}^T \int_{-\infty}^t \frac{(\psi_{\delta} u(t), u(s)[\psi_{\delta}(t)-\psi_{\delta}(s)])}{(t-s)^{1+\alpha}} \ ds \ dt \right| \\
              &\quad \leq \int_{0}^{T} \|\psi_{\delta} u(t) \| \int_{-\infty}^t \frac{\| u(s) \| |\psi_{\delta}(t)- \psi_{\delta}(s)|}{(t-s)^{1+\alpha}} \ ds \ dt \\
              &\quad=  \int_{T- \epsilon - \delta}^{T-\epsilon} \|\psi_{\delta} u(t) \| \int_{-\infty}^t \frac{\| u(s) \| |\psi_{\delta}(t)- \psi_{\delta}(s)|}{(t-s)^{1+\alpha}} \ ds \ dt. 
              \end{aligned}
             \end{equation}
             The last inequality is due to the fact that $\| \psi_{\delta} u(t)\| =0$ for $t> T- \epsilon - \delta $ and $\psi_{\delta}(t)-\psi_{\delta}(s)=0$ for 
             $t\leq T-\epsilon - \delta$. 
             We define for $0\leq t_0 < t \leq T$, the function
              \[
               F(t,t_0) := \int_{t_0}^t \frac{\|u(s) \|}{(t-s)^{\alpha}} \ ds. 
              \]
              Since $\| u(s) \| \in L^2(0,T)$, then also $\| u(s) \| \in L^1(0,T)$. From Lemma 4.1 in \cite{r96}, for almost every $t \in (0,T)$
             we have $F(t,t_0)\leq F(t,0)<\infty$. Also, as explained earlier in the proof of Lemma \ref{l:cut}, $F(t,t_0) \in L^2(0,T)$ for any $t_0 \in (T-2\epsilon,T)$.  
             We choose $\epsilon$ so that $F(T-\epsilon,0)<\infty$ and $T- \epsilon$ is a Lebesgue point for $\| u \|$ and $F(T-\epsilon, 0)$
             and hence also for $F(T-\epsilon,t_0)$. 
             
              Now if we define
              \[
               H(t):= \int_{-\infty}^t \frac{\| u(s)\| |\psi_{\delta}(t)-\psi_{\delta}(s)|}{(t-s)^{1+\alpha}} \ ds,
              \] 
              then $H(t)=0$ for $t\leq T- \epsilon - \delta$. Furthermore, if $t_0 < T- \epsilon - \delta$, then  for $t \in [T- \epsilon - \delta, T- \epsilon]$ we have
              \begin{equation}  \label{e:delta2}
               \begin{aligned}
               H(t) &\leq %\int_{0}^{t_0} \frac{\| u(s) \|}{|T- \epsilon - \delta - t_0|^{1+\alpha}} \ ds 
                 + \frac{1}{\delta} \int_{t_0}^t \frac{\| u(s)\|}{(t-s)^{\alpha}} \ ds \\
                       &= \delta^{-1}F(t,t_0).
                 \end{aligned}
              \end{equation}
              We now multiply by $\| \psi_{\delta}u(t)\|$ and integrate over the variable $t$. 
              %Since $T - \epsilon$ is a Lebesgue point for $\| u\|$, 
              %We have that 
               %\begin{equation}  \label{e:delta3}
              %  C \int_{T- \epsilon - \delta}^{T- \epsilon} \| \psi_{\delta} u \| \ dt 
               % \leq  C \int_{T- \epsilon - \delta}^{T- \epsilon} \| u \|  \ dt \to 0 \quad \text{ as } \ \delta \to 0.  
              % \end{equation}
              Since $T-\epsilon$ is a Lebesgue point for $\| u\|$, we have
              \[
               \lim_{\delta \to 0} \frac{1}{\delta} \int_{T- \epsilon -\delta}^{T-\epsilon} \| \psi_{\delta } u(t) \| F(t,t_0) \leq \| u(T-\epsilon) \| F(T-\epsilon,t_0). 
               \]
               Since $F(T-\epsilon,0)< \infty$ it follows that 
             \begin{equation}  \label{e:delta4}
              \lim_{t_0 \to T-\epsilon} F(T-\epsilon,t_0) \to 0. 
             \end{equation}
             Combining \eqref{e:delta1}, \eqref{e:delta2}, and \eqref{e:delta4} we conclude \eqref{e:deltazero}. 
       \end{proof}

           We now give the proof of the Main Theorem. 
           \begin{proof}[Proof of Theorem \ref{t:main}]       
             Let $u_1,u_2$ be solutions to \eqref{e:weak} with some fixed right hand side $f \in \mathscr{V}'$. Assume also that $u_1(t)=u_2(t)$ for 
             $t \leq 0$. Then $u=u_1(t)-u_2(t)$ is a solution to \eqref{e:weak} with zero right hand side. Furthermore, $u(t)=0$ for $t\leq 0$. 
             
             From Lemma \ref{l:cut} we have that $\psi u$ satisfies \eqref{e:transfer}. We choose $h<\epsilon/2$ 
             so that $\psi u(t)=0$ for $t\geq T-2h$. 
             If we choose $ \eta_{\overbar{h}}$ as a test function with 
             $\eta \in C_0^1(0,T-2h; \mathscr{V})$, then from \eqref{e:2transf}, Lemma \ref{e:stek}, and \eqref{e:transfer2} we obtain  
             \begin{equation}   \label{e:sol1}
             \begin{aligned}
               &\int_{-\infty}^T \frac{((\psi u)_h, \eta)}{(T-t)^{\alpha}} \ dt  
                + \alpha \int_{-\infty}^T \int_{-\infty}^t \frac{((\psi u)_h(t) -(\psi u)_h(s), \eta(t)-\eta(s))}{(t-s)^{1+\alpha}} \ ds \ dt \\
               &\quad - \int_{-\infty}^T (\psi u)_h \partial_t^{\alpha} \eta \ dt  
                 + \int_0^T a(t, \psi u(t), \eta_{\overbar{h}}(t)) \ dt \\
               %&\quad + \int_0^T \frac{1}{h} \int_s^{\min\{s+h,T\}} a(t,\psi u(t), \eta(s)) \ dt \ ds \\
               &= \alpha \int_{-\infty}^T \int_{-\infty}^t \frac{(\eta_{\overbar{h}},u(s)[\psi(t)-\psi(s)])}{(t-s)^{1+\alpha}} \ ds \ dt. 
               %Is there an \alpha missing above (??)
               %Use a shorter overline (??)
             \end{aligned}
             \end{equation}
           If $G(t)$ is as defined as in \eqref{e:g}, and $\eta(t)=0$ for $t>T-2h$, then 
            \[
             \int_{-\infty}^T( \eta_{\overbar{h}},G(t)) \ ds = \int_{-\infty}^T (\eta, G_h(t)) \ ds. 
            \]
            We also have that  
            \[
             \begin{aligned}
              \int_0^T a(t, \psi u(t), \eta_{\overbar{h}}(t)) \ dt  
                 &= \int_0^T \frac{1}{h} \int_{t-h}^t a(t,\psi u(t), \eta(s)) \ ds \ dt \\
                 &= \int_0^T \frac{1}{h} \int_s^{\min\{s+h,T\}} a(t,\psi u(t), \eta(s)) \ dt \ ds \\
                 &=\int_0^T \frac{1}{h} \int_s^{s+h} a(t,\psi u(t), \eta(s)) \ dt \ ds. 
              \end{aligned}
             \]
             In the second equality above we have used that $\psi u =0$ for $t \geq T-\epsilon$ and extended $\psi u=0$ for $t \geq T$. 
                Now since $\psi u_h$ is Lipschitz in time,  $\partial_t^{\alpha} (\psi u_h)$ as given in \eqref{e:marchaud} 
            is well-defined and in $L^2(0,T;\mathscr{H})$. Then $(\psi u)_h$ is a valid test function, and if we let  $\eta = (\psi u)_h$ in \eqref{e:sol1}, then we obtain  
             \begin{equation}   \label{e:sol2}
             \begin{aligned}
               &\frac{1}{2}\int_{-\infty}^T \frac{\|(\psi u)_h\|^2}{(T-t)^{\alpha}} \ dt  
                + \frac{\alpha}{2} \int_{-\infty}^T \int_{-\infty}^t \frac{\| (\psi u)_h(t) -(\psi u)_h(s)\|^2}{(t-s)^{1+\alpha}} \ ds \ dt \\
                &\quad + \int_0^T \frac{1}{h} \int_s^{\min\{s+h,T\}} a(t,\psi u(t), (\psi u)_h(s)) \ dt \ ds \\
               &= \alpha \int_{-\infty}^T ((\psi u)_h, G_h(t,s)) \ dt. 
               \end{aligned}
             \end{equation}
            By omitting the first two positive terms we obtain the following inequality
            \[
             \int_0^T \frac{1}{h} \int_s^{\min\{s+h,T\}} a(t,\psi u(t), (\psi u)_h(s)) \ dt \ ds \leq 
                \alpha \int_{-\infty}^T ((\psi u)_h, G_h(t,s)) \ dt.
            \]
            We showed earlier in the proof of Lemma \ref{l:cut} that $ G \in L^2(0,T;\mathscr{H}) $. 
            Since also $\psi u \in L^2(0,T;\mathscr{H})$, we let $h \to 0$ and use Lemmas \ref{l:conve1} and \ref{l:conve2} to conclude 
            \[
             \int_0^T a(t,\psi u, \psi u) \ dt \leq \alpha \int_{-\infty}^T \int_{-\infty}^t \frac{(\psi u(t), u(s)[\psi(t)-\psi(s)])}{(t-s)^{1+\alpha}} \ ds \ dt.
            \]
            If we let $\psi = \psi_{\delta}$ as defined in \eqref{e:psidelta} and let $\delta \to 0$ we obtain from Lemma \ref{l:psidelta} that
            \[
             \int_{0}^{T-\epsilon} \lambda \|u(t)\|_{\mathscr{V}}^2 \ dt  \leq  \int_{0}^{T-\epsilon}a(t, u,u) \ dt \leq 0,
            \]
            where $\lambda$ is the coercivity constant for the bilinear form $a$. 
            Then $u(t)=0$ for $t \leq T-\epsilon$. Since $\epsilon$ can be chosen arbitrarily small, we conclude $u(t)\equiv 0$ on $[0,T]$. 
         \end{proof}

\bibliographystyle{amsplain}
\bibliography{refunique1}

\end{document}